\newtheorem{theorem}{Theorem}
\newtheorem{proposition}[theorem]{Proposition}
\newtheorem{corollary}[theorem]{Corollary}
\theoremstyle{definition}
\newtheorem{definition}{Definition}
\newtheorem{example}{Example}
\newtheorem{remark}{Remark}
\begin{document}

\begin{frontmatter}

\title{Nonuniform power instability and Lyapunov sequences}

\author[adresaaa]{Ioan-Lucian Popa}
\ead{lucian.popa@uab.ro}
\author[adresaa]{Traian Ceau\c su}
\ead{ceausu@math.uvt.ro}
\author[adresa,adresaa]{Mihail Megan}
\ead{megan@math.uvt.ro}

\address[adresa]{Academy of Romanian Scientists, Independent\c ei
54, 050094 Bucharest, Romania}
\address[adresaa]{Department of Mathematics,
Faculty of Mathematics and Computer Science, West University of
Timi\c soara, V.  P\^ arvan Blv. No. 4, 300223-Timi\c soara,
Romania}
\address[adresaaa]{Department of Mathematics, University "$1^{st}$ December 1918" of Alba Iulia, 510009 Alba Iulia, Romania}

\begin{abstract}
The aim of this paper is to present necessary and sufficient
conditions for nonuniform power instability property of linear
discrete-time systems in Banach spaces. A characterization of the
nonuniform power instability in terms of Lyapunov sequences is
also given.
\end{abstract}
\begin{keyword} uniform power instability \sep
nonuniform power instability, Lyapunov sequences


 \MSC[2010] 34D05, 39A05
\end{keyword}
\end{frontmatter}

 \begin{flushright}
The development of mathematical dynamical systems\\
 theory can be viewed as the simultaneous pursuit\\
of two lines of research: on the one hand, the quest\\
for simplicity, comprehensibility, stability; on the other\\
 hand, the discovery of complexity,instability, chaos.\\
 Morris W. Hirsch, \cite{hirsch}, pag. 26
\end{flushright}

\section{Introduction}

The instability problem together with the stability and dichotomy
problems became one of special interests in the field of the
asymptotic behavior of linear discrete-time systems. In this
context, there are different characterizations of instability in
the papers due to N. van Minh, F. R$\ddot{a}$biger, and R.
Schnaubelt \cite{vanminh}, M. Megan, A. L. Sasu and B. Sasu
\cite{megan1}-\cite{megan2}, R. Naulin and C. J. Vanegas
\cite{naulin}, A.L. Sasu \cite{sasu}, V.E. Slyusarchuk
\cite{slyusarchuk}, B.-G. Wang and Z.-C. Wang \cite{wang}.

In \cite{sasu} A.L. Sasu obtained so-called theorems of Perron
type for exponential instability of one parameter semigroups. This
method is generalized and applied by M. Megan, A. L. Sasu and B.
Sasu in \cite{megan1} for the study of evolution operators and in
\cite{megan2} for the linear skew-product flows. Also in
\cite{megan1} generalizations to the nonuniform case of some
results of N. van Minh, F. Rabiger, and R. Schnaubelt
\cite{vanminh} are obtained. In \cite{wang} B.-G. Wang and Z.-C.
Wang characterize instability from the hyperbolic point of view.

The importance of Lyapunov functions (sequences) is well established
in the study of linear and nonlinear systems in both continous and
discrete-time. Thus, after the seminar work of A. M. Lyapunov
(republished in 1992 \cite{lyapunov}) relevant results using the
Lyapunov's direct method are presented in the books due to J.
LaSalle and S. Lefschetz \cite{lasalle}, W. Hahn \cite{hahn}, A.
Halanay and D. Wexler \cite{halanay} for linear discrete systems and
in the review of A.A. Martynyuk \cite{martynyuk} for nonlinear
discrete systems. In \cite{rugh} (Theorem 23.6) W. Rugh present the
existence of a quadratic Lyapunov sequence in order to develop
instability criteria for finite-dimensional case. This result it is
extended by J.J. DaCunha \cite{dacunha} for the timescale case.

This paper is organized as follows. In Section 2 and Section 3 we
focus our attention on some properties of uniform and nonuniform
power instability of linear discrete-time systems. Thus, we
establish relations between these concepts and we give some theorems
of characterizations for these concepts of linear discrete-time
systems in Banach spaces. In Section 4 we introduce the notion of
Lyapunov sequence and we show how nonuniform power instability can
be characterized in terms of Lyapunov sequences.

\section{Power instability}
Let $X$ be a real or complex Banach space. We consider the linear
discrete-time system

\begin{equation*}\tag{$\mathfrak{A}$}\label{A}
x_{n+1}=A{(n)}x_{n},\;\;\text{for all}\;\;n\in\mathbb{N},
\end{equation*}
where $x_n\in X$ and the operators $A(n)$ belong to
$\mathcal{B}(X),$ the space of all bounded linear operators on
$X.$ Throughout the paper, the norm on $X$ and on $\mathcal{B}(X)$
will be denoted by $\parallel . \parallel.$

We recall that if
$$\Delta=\{(m,n)\in\mathbb{N}: m\geq n\}$$
then the solution $x=(x_n)$ of (\ref{A}) is given by
$x_m=\mathcal{A}(m,n)x_n,$ for all $(m,n)\in\Delta$ where
$\mathcal{A}:\Delta\rightarrow\mathcal{B}(X)$ is defined by
 \begin{equation}\label{eqAmn}
\mathcal{A}(m,n)=\left\{\begin{array}{ll}
A{(m-1)}\cdot \ldots\cdot  A{(n)},\;\; \;\; m \geq n+1\\
\qquad\quad I\qquad\qquad\quad,\;\;\;\;\; m=n.
\end{array}\right.
 \end{equation}
 Clearly, $\mathcal{A}(m,n)\mathcal{A}(n,p)=\mathcal{A}(m,p),$ for
 all $(m,n),(n,p)\in\Delta.$

\begin{definition}\label{D: upis}
The linear discrete-time system  (\ref{A}) is said to be
 {\it uniformly power instable} (UPIS) if there are some
 constants $N\geq 1$ and $r\in (0,1)$ such that:
 \begin{equation}\label{upis}
 \parallel \mathcal{A}(n,p)x\parallel\; \leq N r^{m-n}\parallel
 \mathcal{A}(m,p) x\parallel,
 \end{equation}
 for all $(m,n),(n,p)\in \Delta$ and all $x \in X.$
\end{definition}
\begin{definition}\label{D: npis}
The linear discrete-time system  (\ref{A}) is said to be
 {\it nonuniformly power instable} (NPIS) if there exist a nondecreasing sequence $N:\mathbb{N}\longrightarrow [1,\infty)$ and $r\in
(0,1)$ such that:
 \begin{equation}\label{npis}
 \parallel \mathcal{A}(n,p)x\parallel\; \leq N(m) r^{m-n}\parallel
 \mathcal{A}(m,p) x\parallel,
 \end{equation}
 for all $(m,n),(n,p)\in \Delta$ and all $x \in X.$
\end{definition}
Two particular cases of nonuniform power instability are
introduced by
\begin{definition}\label{D: pis}
The linear discrete-time system  (\ref{A}) is said to be
\begin{description}

\item[{\it{i)}}] {\it power instable} PIS if there are some
constants $N \geq 1,$ $r\in(0,1)$ and $s\geq 1$ such that:
\begin{equation}\label{eis}
\parallel \mathcal{A}(n,p)x\parallel\; \leq N r^{m-n}s^{ n}
\parallel \mathcal{A}({m},{p})x\parallel,
\end{equation}
 for all $(m,n),(n,p)\in \Delta$ and all $x \in X.$

\item[{\it{ii)}}] {\it strongly power instable} SPIS if there are
some constants $N \geq 1,$ $r\in (0,1)$ and $s\in \left[1,
\dfrac{1}{r}\right)$ such that:
\begin{equation}\label{ses}
\parallel \mathcal{A}(n,p)x\parallel\; \leq N r^{m-n}s^{n} \parallel
\mathcal{A}({m},{p})x\parallel,
\end{equation}
 for all $(m,n),(n,p)\in \Delta$ and all $x \in X.$
\end{description}
\end{definition}
It is obvious that if  system (\ref{A}) is UPIS than it is NPIS.
The following example shows that the converse implication is not
valid.
\begin{example}\label{E: npisNOTupis}
Let $c\in\mathbb{R}_{+}^{*},$ $b\geq 2$ and (\ref{A}) be the
linear-time system defined for all $n\in\mathbb{N}$ by
$$A_{n} =c\cdot a_{n}I,\;\;\text{where}\;\;
  a_{n} = \left\{
  \begin{array}{l l}
    b^{-n} & \quad \text{if $n=2k$}\\
    b^{n+1} & \quad \text{if $n=2k+1$}\;\;.\\
  \end{array} \right.$$
  The following statements are true:

  \begin{description}

\item[{\it{i)}}](\ref{A}) is not UPIS for all
$c\in\mathbb{R}_{+}^{*}$;

\item[{\it{ii)}}] if $c>1,$ then (\ref{A}) is NPIS.
\end{description}

Let $(m,n,x)\in\Delta\times X.$ According to (\ref{eqAmn}) we have
that
\[
  \mathcal{A}({m},{n})x = \left\{
  \begin{array}{l l}
    c^{m-n}a_{mn}x & \quad m>n\\
    x & \quad m=n\\
  \end{array} \right. ,
\]
 where
\[
  a_{mn} = \left\{
  \begin{array}{l l}
    b^{m-n} & \quad \text{if $m=2q$ and $n=2p$}\\
    b^{m} & \quad \text{if $m=2q$ and $n=2p+1$}\\
    b^{-n} & \quad \text{if $m=2q+1$ and $n=2p$}\\
    1 & \quad \text{if $m=2q+1$ and $n=2p+1$}\\
  \end{array} \right.
\]
$(i)$ If we suppose that (\ref{A}) is UPIS then there exist some
constants $N \geq 1$ and $r\in(0,1)$ such that
$$\parallel x\parallel\; \leq N r^{m-n} \parallel \mathcal{A}({m},{n})x\parallel=
N (rc)^{m-n}a_{mn}\parallel x\parallel$$
 for all $(m,n,x)\in\Delta\times X,$ which  is equivalent with
\[
  \left\{
  \begin{array}{l l}
    \left( \dfrac{1}{rc}\right)^{m-n}b^{-m+n} \leq N & \quad \text{if $m=2q$ and $n=2p$}\\
    \left( \dfrac{1}{rc}\right)^{m-n}b^{-m} \leq N & \quad \text{if $m=2q$ and $n=2p+1$}\\
    \left( \dfrac{1}{rc}\right)^{m-n}b^{n} \leq N & \quad \text{if $m=2q+1$ and $n=2p$}\\
    \left( \dfrac{1}{rc}\right)^{m-n} \leq N & \quad \text{if $m=2q+1$ and $n=2p+1$}.\\
  \end{array} \right.
\]
There are two cases that can be considered at this point. If $c\in
(0,1]$ then for  $m=2q+1$ and $n=2p+1\in \mathbb{N}$ fixed we have
that
\begin{equation}\label{ex1eq1}
\lim\limits_{q\rightarrow\infty} \left(
\frac{1}{rc}\right)^{2q-2p}=\infty.
\end{equation}
If $c\in (1,\infty)$,  $n=2p$ and $m=n+1$ it follows that
\begin{equation}\label{ex1eq2}
\lim\limits_{p\rightarrow\infty} \left(
\frac{1}{rc}\right)b^{2p}=\infty.
\end{equation}
According to (\ref{ex1eq1}) and (\ref{ex1eq2}) we can conclude
that (\ref{A}) can not be UPIS.

$(ii)$ If $c>1$ then for $r=\dfrac{1}{cb}$ and $N(m)=b^{m}$ the
system (\ref{A}) is NPIS.
\end{example}
\begin{remark}
Previous Example (with $b=2$) was studied in \cite{popa5} in order
to prove that the concepts of PIS and NPIS are not equivalent.
Thus, the system (\ref{A}) is PIS if and only if $c\geq 1$ and
SPIS if and only if $c\geq 2.$
\end{remark}
\begin{proposition}\label{P: mneis}
The linear discrete-time system (\ref{A}) is  NPIS if and only if
there are two nondecreasing sequences
$\varphi,\tau\,{:}\,\mathbb{N}\longrightarrow [1,\infty)$ with
$\lim\limits_{n\rightarrow\infty}\,\tau(n){=}\infty$ such that
\begin{equation}\label{eq1 P:mneis}
{\tau(m-n)}\parallel{x}\parallel\;\leq\varphi(m)\parallel{\mathcal{A}(m,n)x}\parallel,
\end{equation}
 for all $(m,n,x)\in\Delta\times X$.
\end{proposition}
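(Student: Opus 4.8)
The plan is to prove the two implications separately; both reduce to direct manipulations of the defining inequalities, and the cocycle identity $\mathcal{A}(m,n)\mathcal{A}(n,p)=\mathcal{A}(m,p)$ is the only structural fact I will use.

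For the necessity I would assume (\ref{A}) is NPIS with constants $N(\cdot)$ and $r\in(0,1)$ and simply specialize (\ref{npis}) to $p=n$. Since $\mathcal{A}(n,n)=I$ this yields $\|x\|\le N(m)r^{m-n}\|\mathcal{A}(m,n)x\|$, equivalently $(1/r)^{m-n}\|x\|\le N(m)\|\mathcal{A}(m,n)x\|$, for all $(m,n,x)\in\Delta\times X$. Then I would set $\tau(k):=(1/r)^{k}$ and $\varphi(m):=N(m)$; since $r\in(0,1)$ we have $1/r>1$, so $\tau$ is nondecreasing with $\tau(0)=1$ and $\tau(k)\to\infty$, while $\varphi=N$ is nondecreasing with values in $[1,\infty)$, and (\ref{eq1 P:mneis}) holds.

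For the sufficiency I would start from (\ref{eq1 P:mneis}) with nondecreasing $\varphi,\tau:\mathbb{N}\to[1,\infty)$, fix $(m,n),(n,p)\in\Delta$ and $x\in X$, and apply (\ref{eq1 P:mneis}) to the vector $\mathcal{A}(n,p)x$. Using $\mathcal{A}(m,n)\mathcal{A}(n,p)x=\mathcal{A}(m,p)x$ this gives $\tau(m-n)\|\mathcal{A}(n,p)x\|\le\varphi(m)\|\mathcal{A}(m,p)x\|$, hence (because $\tau\ge 1$) $\|\mathcal{A}(n,p)x\|\le\varphi(m)\|\mathcal{A}(m,p)x\|$. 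Finally I would choose $r:=1/2$ and $N(m):=\varphi(m)2^{m}$, which is nondecreasing with values in $[1,\infty)$, and observe that $N(m)r^{m-n}=\varphi(m)2^{n}\ge\varphi(m)$ because $n\ge 0$, so the previous estimate implies (\ref{npis}) and (\ref{A}) is NPIS.

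The step I expect to need the most care — though it is hardly an obstacle — is the choice of $N$ in the sufficiency part: since $\tau$ is only nondecreasing and unbounded, $1/\tau(m-n)$ cannot be dominated by a geometric term $r^{m-n}$ with a constant independent of $m$, and one must instead absorb the entire geometric weight into the $m$-dependent factor $N(m)$; this is legitimate precisely because the exponent $m-n$ appearing in (\ref{npis}) never exceeds $m$. I note in passing that the assumption $\tau(n)\to\infty$ is not actually used in this direction — it is produced for free by the construction in the necessity part and is kept in the statement only to make it sharper.
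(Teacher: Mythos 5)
Your argument is correct for the proposition as literally stated, and your necessity half coincides with the paper's (set $p=n$ in (\ref{npis}) and take $\varphi=N$, $\tau(k)=r^{-k}$). The sufficiency half, however, takes a genuinely different route. The paper actually uses the hypothesis $\lim_{n\to\infty}\tau(n)=\infty$: it chooses $c$ with $\tau(c)>1$, writes $m=n+kc+s$, and iterates (\ref{eq1 P:mneis}) along blocks of length $c$ (applied, via the cocycle identity, to the vectors $\mathcal{A}(n+jc,n)x$), arriving at $\varphi(m)^{k+1}\parallel\mathcal{A}(m,n)x\parallel\;\geq\tau(0)\tau(c)^{k}\parallel x\parallel$, i.e. a genuine geometric factor $r^{-(m-n)}$ with $r=e^{-\ln\tau(c)/c}$, paid for by the $m$-dependent factor $\varphi(m)^{k+1}$. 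You instead exploit the exact form of Definition \ref{D: npis}: since the nonuniform constant $N(m)$ is indexed by the \emph{final} time and $m-n\leq m$, the weight $r^{m-n}$ can be absorbed wholesale into $N(m)$ (your $r=1/2$, $N(m)=2^{m}\varphi(m)$), so only $\tau\geq 1$ is used. That is legitimate for the definition as written, and your closing observation is accurate: with this definition, your argument shows NPIS is already equivalent to the single estimate $\parallel x\parallel\;\leq\varphi(m)\parallel\mathcal{A}(m,n)x\parallel$, so the unboundedness of $\tau$ is superfluous in that direction. What the paper's longer computation buys is content and robustness: it extracts an actual exponential rate tied to $\tau$, and it would survive stronger formulations of nonuniform instability (e.g. a constant depending on the smaller index $n$, where the exponent $m-n$ is not dominated by the argument of $N$), for which your absorption trick would fail. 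In short: correct, identical necessity, but a more elementary and definition-dependent sufficiency than the paper's block-iteration argument.
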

\begin{proof}
{\it Necessity.} It is a simple verification for $\varphi(m)=N(m)$
and $\tau(m)=r^{-m}$ where $r\in (0,1)$ is given by Definition
\ref{D: npis}.

{\it Sufficiency.} Using the hypothesis
$\lim\limits_{n\rightarrow\infty}\tau (n)=\infty$ we have that
there exists $c\in\mathbb{N}^{*}$ with $\tau (c)>1.$ Then for all
$(m,n,x)\in \Delta\times X$ there exist two constants
$k\in\mathbb{N}$ and $s\in\{0,1,\ldots ,c-1\}$ such that
$m=n+kc+s.$ According to (\ref{eq1 P:mneis}) we have that
\begin{align*}
\parallel\mathcal{A}({m},{n})x\parallel
&\geq\dfrac{\tau
(s)}{\varphi (m)}\parallel\mathcal{A}({n+kc},{n})x\parallel\;,\\
\parallel\mathcal{A}({n+kc},{n})x\parallel&\geq\dfrac{\tau
(c)}{\varphi
(n+kc)}\parallel\mathcal{A}({n+(k-1)c},{n})x\parallel\;,\\
\parallel\mathcal{A}({n+(k-1)c},{n})x\parallel
&\geq\dfrac{\tau (c)}{\varphi
(n+(k-1)c)}\parallel\mathcal{A}({n+(k-2)c},{n})x\parallel\;,\\
&\vdots\;\;,\\
\parallel\mathcal{A}({n+2c},{n})x\parallel
&\geq\dfrac{\tau (c)}{\varphi
(n+2c)}\parallel\mathcal{A}({n+c},{n})x\parallel\;,\\
\parallel\mathcal{A}({n+c},{n})x\parallel
&\geq\dfrac{\tau (c)}{\varphi (n+c)}\parallel x\parallel\;,
\end{align*}
which implies
\begin{equation*}
\parallel\mathcal{A}({m},{n})x\parallel\;\geq\dfrac{\tau (s)\tau
(c)^{k}}{\varphi (m)\cdot\varphi (n+kc)\cdot\ldots\cdot\varphi
(n+c)}\parallel x\parallel.
\end{equation*}
Furthermore, since
\begin{equation*}
\varphi (m)\geq \varphi (n+kc)\geq\ldots\geq \varphi
(n+2c)\geq\varphi (n+c),
\end{equation*}
we obtain
\begin{align*}
\varphi (m)^{k+1}\parallel\mathcal{A}({m},{n})x\parallel &\geq
\tau (0)\tau (c)^{k}\parallel x\parallel\\
&=\tau (0) r^{-(m-n)}r^{s}\parallel x\parallel,
\end{align*}
where $r=e^{-\frac{\ln \tau (c)}{c}}.$ Thus, we can conclude that
(\ref{A}) is NPIS.
\end{proof}
%
A similar result with Proposition \ref{P: mneis} for the concept
of UPIS is given by
\begin{corollary}\label{C: upis}
The linear discrete-time system (\ref{A}) is UPIS if and only if
there exists a nondecreasing sequence
$\varphi:\mathbb{N}\longrightarrow\mathbb{R}_{+}^{*}$ with
$\lim\limits_{n\rightarrow\infty}\varphi(n)=\infty$ such that
\begin{equation*}
\varphi{(m)}\parallel x\parallel \;\leq\; \parallel
\mathcal{A}(m+n,n)x\parallel,\;\;\text{for all}\;\;
(m,n)\in\mathbb{N}^{2}\;\;\text{and all}\;\;x\in X.
\end{equation*}
\end{corollary}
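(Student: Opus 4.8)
The plan is to mirror the sufficiency argument of Proposition~\ref{P: mneis}, exploiting that the inequality in the statement rewrites, for $(m,n)\in\Delta$, as
\[
\varphi(m-n)\,\|x\|\ \le\ \|\mathcal{A}(m,n)x\|\,,\qquad x\in X,
\]
i.e.\ the ``gain'' factor depends \emph{only} on $m-n$. This single‑index dependence is exactly what prevents the accumulation of $\varphi$‑values (which appeared as $\varphi(m)^{k+1}$ in Proposition~\ref{P: mneis}) and is the reason the conclusion here is the clean uniform estimate \eqref{upis}.

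\emph{Necessity.} Assuming UPIS with constants $N\ge 1$ and $r\in(0,1)$, I would apply \eqref{upis} to the triple $(m+n,n,n)$, for which $\mathcal{A}(n,p)=\mathcal{A}(n,n)=I$; this gives $\|x\|\le N r^{m}\|\mathcal{A}(m+n,n)x\|$, so $\varphi(m):=N^{-1}r^{-m}$ does the job: it is nondecreasing (since $r^{-1}>1$), takes values in $\mathbb{R}_{+}^{*}$, and tends to $\infty$.

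\emph{Sufficiency.} This is the substantive direction. Since $\varphi(n)\to\infty$, fix $c\in\mathbb{N}^{*}$ with $\varphi(c)>1$. For $(m,n)\in\Delta$ write $m-n=kc+s$ with $k\in\mathbb{N}$ and $s\in\{0,1,\dots,c-1\}$, split $\mathcal{A}(m,n)$ along the points $n,\,n+c,\,\dots,\,n+kc,\,m$ via the cocycle identity, and apply the lower bound above to each factor. For every $y\in X$ this yields
\[
\|\mathcal{A}(m,n)y\|\ \ge\ \varphi(s)\,\varphi(c)^{k}\,\|y\|\ \ge\ \varphi(0)\,\varphi(c)^{k}\,\|y\|\,.
\]
Using $kc\le m-n<(k+1)c$ together with $\varphi(c)>1$, I would then estimate $\varphi(c)^{k}\ge\varphi(c)^{-1}\bigl(\varphi(c)^{1/c}\bigr)^{m-n}$ and set $r:=e^{-\ln\varphi(c)/c}\in(0,1)$ and $N:=\varphi(c)/\varphi(0)\ge 1$ (the last inequality by monotonicity of $\varphi$). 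This gives $\|y\|\le N r^{m-n}\|\mathcal{A}(m,n)y\|$ for all $(m,n)\in\Delta$ and $y\in X$. Finally, for $(m,n),(n,p)\in\Delta$ and $x\in X$, putting $y=\mathcal{A}(n,p)x$ and using $\mathcal{A}(m,n)\mathcal{A}(n,p)=\mathcal{A}(m,p)$ recovers \eqref{upis}, i.e.\ UPIS.

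The only delicate point is the passage from the integer iteration count $k$ to the exponent $m-n$: one must ensure the ``remainder'' $s$ contributes only an absolute constant, absorbed into $N$. This is handled uniformly in $(m,n)$ by the two bounds $\varphi(s)\ge\varphi(0)>0$ and $\varphi(c)^{-s/c}\ge\varphi(c)^{-1}$; everything else is bookkeeping.
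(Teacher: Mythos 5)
Your proof is correct and follows essentially the route the paper intends: the corollary is stated without proof as the uniform analogue of Proposition~\ref{P: mneis}, and your argument is exactly that adaptation (split $m-n=kc+s$, iterate the lower bound along the cocycle, absorb the remainder $s$ into the constant), with the necessity direction and the reduction from the two-index estimate to the three-index inequality \eqref{upis} via $y=\mathcal{A}(n,p)x$ correctly supplied.
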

\begin{remark}\label{R: 2upis}
For time-invariant linear systems, (i.e. $A(n)=A$ and
$\mathcal{A}(m,n)=A^{m-n}$) we have that
$\lim\limits_{n\rightarrow\infty}\parallel A^{n}\parallel
\;=\infty$ is a necessary and sufficient condition for the concept
of UPIS.

In the following example we show that for time-varying linear
systems the relation
\begin{equation}\label{eq inf}
\lim\limits_{m\rightarrow\infty}\parallel
\mathcal{A}(m,n)\parallel \;=\infty
\end{equation}
represents just a necessary condition for UPIS.
\end{remark}
\begin{example}\label{E: upis}
Let (\ref{A}) be the linear-time system given by
\begin{equation}\label{eq A:E upis}
A(n)=\dfrac{u(n+1)}{u(n)}I,
\end{equation}
 where $u(n)=n+2,$ for all
$n\in\mathbb{N}.$ According to (\ref{eqAmn}) we have that
\begin{equation*}
  \mathcal{A}(m,n)x\;=\;\dfrac{m+2}{n+2}x,\;\;\text{for
  all}\;\;(m,n,x)\in\Delta\times X.
\end{equation*}
Firstly, we observe that
$\lim\limits_{m\rightarrow\infty}\parallel
\mathcal{A}(m,n)\parallel\; =\infty.$ On the other hand, if we
suppose that (\ref{A}) is UPIS then there exists $N\geq 1$ and
$r\in (0,1)$ such that
\begin{equation*}
r^{n}\leq
N\;\dfrac{m+n+2}{n+2}\;r^{m}=\dfrac{N}{n+2}\;m\;r^{m}+N\;r^{m}.
\end{equation*}
From here, for fixed $n\in\mathbb{N}$ and $m\rightarrow\infty$ we
have that $0<r^{n}\leq 0$ which is false.

\end{example}
Note that the property (\ref{eq inf}) is not valid for the concept
of NPIS. This fact is illustrated by
\begin{example}\label{E: npis}
Let (\ref{A}) be the discrete-time system given by (\ref{eq A:E
upis}), with $u(n)=e^{-n},$ for all $n\in\mathbb{N}.$ Thus, we
obtain
\begin{equation*}
  \mathcal{A}(m,n)x=e^{n-m}x,\;\;\text{for all}\;\;(m,n,x)\in\Delta \times X.
\end{equation*}
Obvious, (\ref{eq inf}) it is false. Furthermore, the system
(\ref{A}) is not UPIS. Finally, we remark that for
$N(n)=e^{n^{2}}$ and $r=e^{-2}$ the inequality (\ref{npis}) is
satisfied and thus the system (\ref{A}) is NPIS.
\end{example}
We can observe that the system considered in Example \ref{E: npis}
is uniformly exponentially stable. For further information about
(non)uniform exponential stability concepts see \cite{popa4}.
There are some particular concepts of NPIS for what the property
(\ref{eq inf}) holds (see \cite{popa5}).
\section{Auxiliary results.}
\begin{theorem}\label{T: NPIS}
The linear discrete-time system (\ref{A}) is NPIS if and only if
there are some constants $d> 1,$ $p\in (0,+\infty)$ and a
nondecreasing sequence $\theta :\mathbb{N}\rightarrow [1,+\infty)$
such that
\begin{equation}\label{eq T: npis}
\sum\limits_{k=n}^{m}d^{p({m-k})}\parallel
\mathcal{A}(k,n)x\parallel^{p}\;\leq \theta (m)^{p}\parallel
\mathcal{A}(m,n)x\parallel^{p},
\end{equation}
for all $(m,n,x)\in\Delta\times X.$
\end{theorem}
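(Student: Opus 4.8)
I would prove both implications directly from the definition of NPIS (Definition~\ref{D: npis}).

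For \emph{sufficiency}, suppose \eqref{eq T: npis} holds. Dropping all terms in the sum on the left except the single term $k=n$ (recall each summand is nonnegative), I get
\[
d^{p(m-n)}\parallel x\parallel^{p}\;\leq\;\theta(m)^{p}\parallel\mathcal{A}(m,n)x\parallel^{p},
\]
hence $d^{m-n}\parallel x\parallel\leq\theta(m)\parallel\mathcal{A}(m,n)x\parallel$ for all $(m,n,x)\in\Delta\times X$. Applying this with $x$ replaced by $\mathcal{A}(n,q)x$ and using the cocycle identity $\mathcal{A}(m,n)\mathcal{A}(n,q)=\mathcal{A}(m,q)$, I obtain $\parallel\mathcal{A}(n,q)x\parallel\leq\theta(m)d^{-(m-n)}\parallel\mathcal{A}(m,q)x\parallel$ for all $(m,n),(n,q)\in\Delta$, which is exactly \eqref{npis} with $N(m)=\theta(m)$ and $r=1/d\in(0,1)$. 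So \eqref{A} is NPIS. Alternatively, one can invoke Proposition~\ref{P: mneis} with $\varphi(m)=\theta(m)$ and $\tau(j)=d^{j}$.

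For \emph{necessity}, assume \eqref{A} is NPIS with nondecreasing $N:\mathbb{N}\to[1,\infty)$ and $r\in(0,1)$. The key observation is that NPIS lets me bound each intermediate term $\parallel\mathcal{A}(k,n)x\parallel$ (for $n\le k\le m$) by $\parallel\mathcal{A}(m,n)x\parallel$: taking $q=n$ in \eqref{npis} gives $\parallel\mathcal{A}(k,n)x\parallel\leq N(m)\,r^{m-k}\parallel\mathcal{A}(m,n)x\parallel$. Now I need to choose a base $d>1$ so that the geometric weights $d^{p(m-k)}$ are absorbed. Pick any $d\in(1,1/\sqrt{r})$ — equivalently $d^{2}r<1$ — and raise the previous inequality to the power $p$ and multiply by $d^{p(m-k)}$:
\[
d^{p(m-k)}\parallel\mathcal{A}(k,n)x\parallel^{p}\leq N(m)^{p}\,(d^{2}r)^{p(m-k)}\parallel\mathcal{A}(m,n)x\parallel^{p}.
\]
Summing over $k=n,\dots,m$ and bounding $\sum_{k=n}^{m}(d^{2}r)^{p(m-k)}\leq\sum_{j=0}^{\infty}(d^{2}r)^{pj}=\bigl(1-(d^{2}r)^{p}\bigr)^{-1}=:C_{p}<\infty$, I get \eqref{eq T: npis} with $\theta(m)=C_{p}^{1/p}N(m)$, which is nondecreasing and $\geq1$ after possibly enlarging $C_p$ (since $C_p\ge1$). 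Here $p\in(0,\infty)$ is arbitrary, so in fact the characterization holds for every $p>0$.

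The only mild subtlety — the ``main obstacle,'' though it is routine — is the correct calibration of $d$ relative to $r$ in the necessity direction: one must choose $d$ with $d^{2}r<1$ rather than merely $d<1/r$, so that after summing a convergent geometric series remains and the $N(m)$ factor is not inflated by an $m$-dependent number of terms. Everything else is a direct substitution using the semigroup property $\mathcal{A}(m,n)\mathcal{A}(n,q)=\mathcal{A}(m,q)$ and monotonicity of $N$ (resp.\ $\theta$).
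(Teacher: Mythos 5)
Your proof is correct and takes essentially the same route as the paper's: necessity by inserting the NPIS estimate $\parallel\mathcal{A}(k,n)x\parallel\le N(m)r^{m-k}\parallel\mathcal{A}(m,n)x\parallel$ and summing a geometric series, sufficiency by retaining only the $k=n$ term and substituting $x\mapsto\mathcal{A}(n,q)x$ (a step the paper leaves implicit). One small remark: raising that estimate to the power $p$ and multiplying by $d^{p(m-k)}$ gives the factor $(dr)^{p(m-k)}$, not $(d^{2}r)^{p(m-k)}$; your inequality is still valid because $d\ge 1$, but the ``subtlety'' of needing $d^{2}r<1$ is an artifact of this slip rather than a genuine constraint---any $d\in\left(1,\dfrac{1}{r}\right)$ works, as in the paper.
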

\begin{proof}
{\it Necessity.} Using Definition \ref{D: npis} we have that
\begin{align*}
\sum\limits_{k=n}^{m}d^{p(m-k)}\parallel\mathcal{A}(k,n)x\parallel^{p}
&\leq
\sum\limits_{k=n}^{m}d^{p(m-k)}N(m)^{p}r^{p(m-k)}\parallel\mathcal{A}(m,n)x\parallel^{p}\\
&=N(m)^{p}\parallel\mathcal{A}(m,n)x\parallel^{p}\sum\limits_{k=n}^{m}\left[(rd)^{p}\right]^{m-k}\\
&\leq\dfrac{N(m)^{p}}{1-(rd)^{p}}\parallel\mathcal{A}(m,n)x\parallel^{p}
\end{align*}
for any $d\in \left(1,\dfrac{1}{r}\right)$ and all
$(n,x)\in\mathbb{N}\times X.$

{\it Sufficiency.} The inequality (\ref{eq T: npis}) implies that
\begin{equation*}
d^{p(m-n)}\parallel x\parallel^{p}\leq \theta
(m)^{p}\parallel\mathcal{A}(m,n)x\parallel^{p},\;\;\text{for
all}\;\;(m,n,x)\in\Delta\times X.
\end{equation*}
From this it results that
\begin{equation*}
\parallel
x\parallel\leq\theta(m)\left(\dfrac{1}{d}\right)^{m-n}\parallel\mathcal{A}(m,n)x\parallel
\end{equation*}
and thus the system (\ref{A}) is NPIS.
\end{proof}
\begin{corollary}\label{P: lp-UPIS}
The linear discrete-time system (\ref{A}) is UPIS if and only if
there are some constants $p\in (0,+\infty),$ $D\geq 1,$ $d> 1$ such
that
\begin{equation}\label{eq P: UPIS}
\sum\limits_{k=n}^{m}d^{p({m-k})}\parallel
\mathcal{A}(k,n)x\parallel^{p}\;\leq D^{p}\parallel
\mathcal{A}(m,n)x\parallel^{p},
\end{equation}
for all $(m,n,x)\in \Delta\times X.$
\end{corollary}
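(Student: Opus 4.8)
The plan is to mimic, in a uniform setting, the argument used for Theorem \ref{T: NPIS}, replacing the sequence $\theta$ by a constant $D$ and the nondecreasing sequence from Definition \ref{D: npis} by the constant $N$ from Definition \ref{D: upis}. The cocycle identity $\mathcal{A}(m,n)\mathcal{A}(n,p)=\mathcal{A}(m,p)$ will be used to move between two-index and three-index inequalities.

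For the necessity part, assume (\ref{A}) is UPIS with constants $N\geq 1$ and $r\in(0,1)$. First I would fix any $d\in(1,1/r)$, which is possible since $r<1$; then $(rd)^p<1$ for every $p\in(0,\infty)$. Applying (\ref{upis}) to the triple of indices $(m,k,n)$ — legitimate whenever $n\leq k\leq m$, since then $(m,k),(k,n)\in\Delta$ — gives $\parallel\mathcal{A}(k,n)x\parallel\;\leq N r^{m-k}\parallel\mathcal{A}(m,n)x\parallel$. Raising this to the power $p$, multiplying by $d^{p(m-k)}$ and summing over $k$ from $n$ to $m$ produces a geometric series bounded above by $N^{p}\bigl(1-(rd)^{p}\bigr)^{-1}\parallel\mathcal{A}(m,n)x\parallel^{p}$, exactly as in the necessity computation for Theorem \ref{T: NPIS}. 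Setting $D:=N\bigl(1-(rd)^{p}\bigr)^{-1/p}$, which satisfies $D\geq N\geq 1$ because $0<1-(rd)^{p}<1$, we obtain (\ref{eq P: UPIS}).

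For the sufficiency part, suppose (\ref{eq P: UPIS}) holds for some $p\in(0,\infty)$, $D\geq 1$, $d>1$. Discarding all terms of the sum except the one with $k=n$ leaves $d^{p(m-n)}\parallel x\parallel^{p}\leq D^{p}\parallel\mathcal{A}(m,n)x\parallel^{p}$, hence $\parallel x\parallel\;\leq D\,(1/d)^{m-n}\parallel\mathcal{A}(m,n)x\parallel$ for all $(m,n,x)\in\Delta\times X$. To recover the full three-index inequality (\ref{upis}), I would replace $x$ by $\mathcal{A}(n,q)x$ for an arbitrary $q$ with $(n,q)\in\Delta$ and use $\mathcal{A}(m,n)\mathcal{A}(n,q)=\mathcal{A}(m,q)$, which yields $\parallel\mathcal{A}(n,q)x\parallel\;\leq D\,(1/d)^{m-n}\parallel\mathcal{A}(m,q)x\parallel$. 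This is precisely (\ref{upis}) with $N:=D\geq 1$ and $r:=1/d\in(0,1)$, so (\ref{A}) is UPIS.

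The computations are entirely routine; the only point requiring a little care is, on the necessity side, to pick $d$ strictly below $1/r$ so that the geometric series converges and the resulting constant $D$ is finite (and automatically $\geq 1$). I do not anticipate any genuine obstacle — this corollary is just the constant-sequence specialization of Theorem \ref{T: NPIS}, and one could alternatively read it off from that theorem's proof by observing that when $\theta$ may be taken constant the whole argument collapses to the above.
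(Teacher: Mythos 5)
Your proof is correct and is essentially the paper's argument: the paper disposes of this corollary in one line as the constant-sequence specialization of Theorem \ref{T: NPIS}, and your proposal simply writes out that specialization (choosing $d\in(1,1/r)$, summing the geometric series, and in the converse passing from the two-index estimate to the three-index inequality (\ref{upis}) via $x\mapsto\mathcal{A}(n,q)x$), which is exactly what the cited proof implicitly contains.
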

\begin{proof}
It results as a particular case from Theorem \ref{T: NPIS}.
\end{proof}
According to the previous theorem we  can obtain necessary and
sufficient criteria for other (particular) cases of instability.
More precisely
\begin{proposition}\label{P: lp-PIS}
The linear discrete-time system (\ref{A}) is PIS if and only if
there are some constants $p\in (0,+\infty),$ $D\geq 1,$ $d>1$ and
$c> 1$ with $c\in (1,d)$ such that
\begin{equation}\label{eq P: PIS}
\sum\limits_{k=n}^{m}d^{p({m-k})}\parallel
\mathcal{A}(k,n)x\parallel^{p}\;\leq D^{p}c^{pm}\parallel
\mathcal{A}(m,n)x\parallel^{p},
\end{equation}
for all $(m,n,x)\in \Delta\times X.$
\end{proposition}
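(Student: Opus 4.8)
The plan is to follow the proof of Theorem~\ref{T: NPIS} almost line by line, with the factor $c^{pm}$ on the right-hand side of (\ref{eq P: PIS}) playing the role that $\theta(m)^{p}$ played there. As usual the argument splits into \emph{necessity} (derive (\ref{eq P: PIS}) from the PIS estimate (\ref{eis})) and \emph{sufficiency} (recover (\ref{eis}) from (\ref{eq P: PIS})). The only point where I expect to have to be careful is the necessity direction: the geometric summation that produces the weight $c^{pm}$ must be arranged so that its base $c$ stays strictly below $d$, which rules out the choice $d\in(1,1/r)$ that worked for Theorem~\ref{T: NPIS} and makes me pick $d$ in a different range.

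For \emph{necessity} I would start from constants $N\ge 1$, $r\in(0,1)$, $s\ge 1$ as in (\ref{eis}), fix any $c>\max\{s,1\}$ (so that $c>1$ and $s^{k}\le c^{k}$ for all $k\ge 0$), and fix any $d$ with $c<d<c/r$; this interval is nonempty since $r<1$, and automatically $d>1$ and $rd<c$. For $n\le k\le m$, applying (\ref{eis}) with base point $n$, intermediate index $k$ and top index $m$ gives
\[
\parallel\mathcal{A}(k,n)x\parallel\;\le\;Nr^{m-k}s^{k}\parallel\mathcal{A}(m,n)x\parallel\;\le\;Nr^{m-k}c^{k}\parallel\mathcal{A}(m,n)x\parallel.
\]
Raising to the power $p$ (for any fixed $p\in(0,+\infty)$), multiplying by $d^{p(m-k)}$ and summing over $k=n,\ldots,m$ leaves a geometric series with ratio $(rd/c)^{p}<1$, bounded by $c^{pm}\bigl(1-(rd/c)^{p}\bigr)^{-1}$; this yields (\ref{eq P: PIS}) with $D:=N\bigl(1-(rd/c)^{p}\bigr)^{-1/p}\ge 1$ and with this $c\in(1,d)$. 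I would note that $p,D,c,d$ do not depend on $(m,n,x)$.

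For \emph{sufficiency}, starting from (\ref{eq P: PIS}) I would keep only the term $k=n$; since $\mathcal{A}(n,n)=I$ this gives $d^{p(m-n)}\parallel x\parallel^{p}\le D^{p}c^{pm}\parallel\mathcal{A}(m,n)x\parallel^{p}$, hence
\[
\parallel x\parallel\;\le\;D\,c^{n}\Bigl(\tfrac{c}{d}\Bigr)^{m-n}\parallel\mathcal{A}(m,n)x\parallel,\qquad(m,n,x)\in\Delta\times X,
\]
where $c/d\in(0,1)$ because $c<d$. Replacing $x$ by $\mathcal{A}(n,q)x$ for $(n,q)\in\Delta$ and using $\mathcal{A}(m,n)\mathcal{A}(n,q)=\mathcal{A}(m,q)$ extends this to $\parallel\mathcal{A}(n,q)x\parallel\le D(c/d)^{m-n}c^{n}\parallel\mathcal{A}(m,q)x\parallel$ for all $(m,n),(n,q)\in\Delta$ and $x\in X$, which is precisely (\ref{eis}) with the constants $N=D$, $r=c/d$, $s=c$; hence (\ref{A}) is PIS.

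The anticipated obstacle is not computational — both the summation and the substitution are routine — but lies in reconciling the two constraints in the necessity step: convergence of the weighted series needs $rd$ small relative to the exponential base, while the statement demands $c<d$. Choosing $c>\max\{s,1\}$ together with $d\in(c,c/r)$ reconciles them, and this is precisely the adjustment that distinguishes the argument from the SPIS-type estimate, where $rs<1$ already holds so one may keep $d$ below $1/r$.
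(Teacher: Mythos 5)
Your proof is correct and follows essentially the same route as the paper: necessity by inserting the PIS estimate and summing a geometric series to produce the weight $c^{pm}$, sufficiency by keeping only the $k=n$ term and substituting $x\mapsto\mathcal{A}(n,q)x$. The only difference is cosmetic bookkeeping of constants (you fix $c>\max\{s,1\}$ and $d\in(c,c/r)$, while the paper takes $d>s/r$ and gets $c=rd\in(1,d)$), and your sufficiency is in fact slightly more explicit than the paper's.
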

\begin{proof}
{\it Necessity.} Using Definition \ref{D: pis} $(i)$ we have that
$1<\dfrac{1}{r}\leq \dfrac{s}{r}.$ Thus, for $d>1$ with
$0<\dfrac{s}{rd}<1$ we have that
\begin{align*}
\sum\limits_{k=n}^{m}d^{p(m-k)}\parallel\mathcal{A}(k,n)x\parallel^{p}
&\leq N^p(rd)^{pm}\parallel\mathcal{A}(m,n)x\parallel^{p}\sum\limits_{k=n}^{m}\left[\left(\frac{s}{rd}\right)^{p}\right]^{k}\\
&\leq\frac{N^{p}(rd)^{pm}}{1-\left(\frac{s}{rd}\right)^{p}}\parallel\mathcal{A}(m,n)x\parallel^{p},
\end{align*}
for all $(n,x)\in\mathbb{N}\times X.$

{\it Sufficiency.} It is easy to see that, for all
$(m,n,x)\in\Delta\times X$ we have the following inequality
\begin{equation*}
\parallel x\parallel\leq
D\left(\dfrac{c}{d}\right)^{m-n}c^n\parallel\mathcal{A}(m,n)x\parallel.
\end{equation*}
Thus, we can conclude that system (\ref{A}) is PIS.
\end{proof}
\begin{proposition}\label{P: lp-SPIS}
The linear discrete-time system (\ref{A}) is SPIS if and only if
there are some constants $p\in (0,+\infty),$ $D\geq 1,$ $d>1$ and
$c> 1$ with $c^{2}<d$ such that (\ref{eq P: PIS}) hold for all
$(m,n,x)\in \Delta\times X.$
\end{proposition}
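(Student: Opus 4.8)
The plan is to follow the proof of Proposition~\ref{P: lp-PIS} almost verbatim, tightening the ranges of the auxiliary constants so that on the ``necessity'' side one lands on an exponent $c$ with $c^2<d$, and on the ``sufficiency'' side the extracted rate $r$ and reason $s$ satisfy the strong condition $s\in[1,1/r)$. Concretely, $c$ will play the role of $rd$ going one way and the role of $s$ (with $r=c/d$) going the other way, and the inequality $c^2<d$ will turn out to be exactly the translation of $s<1/r$.

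\emph{Necessity.} Assume (\ref{A}) is SPIS, with constants $N\geq1$, $r\in(0,1)$ and $s\in[1,1/r)$ as in Definition~\ref{D: pis}~$(ii)$. Exactly as in Proposition~\ref{P: lp-PIS}, for any $d>1$ with $s/(rd)<1$ one uses $\|\mathcal{A}(k,n)x\|\leq Nr^{m-k}s^{k}\|\mathcal{A}(m,n)x\|$ for $k\in[n,m]$ to get $\sum_{k=n}^{m}d^{p(m-k)}\|\mathcal{A}(k,n)x\|^{p}\leq N^{p}(rd)^{pm}\|\mathcal{A}(m,n)x\|^{p}\sum_{k=n}^{m}[(s/(rd))^{p}]^{k}\leq \frac{N^{p}}{1-(s/(rd))^{p}}(rd)^{pm}\|\mathcal{A}(m,n)x\|^{p}$. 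Setting $c:=rd$ and $D^{p}:=N^{p}/(1-(s/(rd))^{p})$ yields (\ref{eq P: PIS}), so it only remains to choose $d$ so that, in addition to $d>s/r$, we also have $c>1$ and $c^{2}<d$, i.e.\ $rd>1$ and $r^{2}d<1$, that is $1/r<d<1/r^{2}$. Since $s\geq1$ gives $s/r\geq1/r$, and $s<1/r$ gives $s/r<1/r^{2}$, the interval $(s/r,\,1/r^{2})$ is non-empty; any $d$ in it satisfies all three requirements, and this is the single point where the hypothesis $s<1/r$ (rather than merely $s\geq1$, as in the PIS case) is needed.

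\emph{Sufficiency.} Assume (\ref{eq P: PIS}) holds with $p\in(0,+\infty)$, $D\geq1$, $d>1$ and $c>1$ satisfying $c^{2}<d$. Keeping only the $k=n$ summand gives $d^{p(m-n)}\|x\|^{p}\leq D^{p}c^{pm}\|\mathcal{A}(m,n)x\|^{p}$, hence $\|x\|\leq D\,(c/d)^{m-n}c^{n}\|\mathcal{A}(m,n)x\|$ for all $(m,n,x)\in\Delta\times X$; replacing $x$ by $\mathcal{A}(n,p)x$ and using $\mathcal{A}(m,n)\mathcal{A}(n,p)=\mathcal{A}(m,p)$ gives $\|\mathcal{A}(n,p)x\|\leq D\,(c/d)^{m-n}c^{n}\|\mathcal{A}(m,p)x\|$. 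Now put $N:=D$, $r:=c/d$ and $s:=c$. From $c>1$ and $c^{2}<d$ we get $c<c^{2}<d$, so $r\in(0,1)$; also $s=c\geq1$, and $s<1/r$ is equivalent to $c<d/c$, i.e.\ to $c^{2}<d$, which holds. Therefore (\ref{A}) is SPIS.

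There is no genuinely hard step here: the whole argument is bookkeeping. The one thing to be careful about is the necessity part, namely checking that the window $1/r<d<1/r^{2}$ forced by $c^{2}<d$ still overlaps the half-line $d>s/r$ required for the geometric series to converge — and that overlap is guaranteed precisely by the SPIS hypothesis $s<1/r$.
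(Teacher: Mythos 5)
Your proof is correct and follows essentially the same route as the paper: for necessity you pick $d\in\left(\tfrac{s}{r},\tfrac{1}{r^{2}}\right)$ and set $c=rd$, exactly the choice made in the paper's proof, and for sufficiency you extract the $k=n$ term and read off $r=c/d$, $s=c$ with $c^{2}<d$ giving $s<1/r$, which is the argument the paper only sketches by reference to Proposition~\ref{P: lp-PIS}. Your write-up is in fact slightly more complete, since you make explicit the substitution $x\mapsto\mathcal{A}(n,p)x$ and the cocycle identity needed to pass from the starting time $n$ to an arbitrary earlier time $p$, a step the paper leaves implicit.
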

\begin{proof}
{\it Necessity.} Since $0<r<1\leq s <\dfrac{1}{r}$ we have that
$1\leq \dfrac{s}{r}<\dfrac{1}{r^2}.$ In the same manner as we proved
Proposition \ref{P: lp-PIS}, if $d>1$ such that
$\dfrac{s}{r}<d<\dfrac{1}{r^2}$ and $c=rd>s\geq 1,$ then relation
(\ref{eq P: PIS}) it is verified for all $(n,x)\in\mathbb{N}\times
X.$

{\it Sufficiency.} This follows using similar arguments to those in
the proof of Proposition \ref{P: lp-PIS}.
\end{proof}
\section{Lyapunov sequences}
\begin{definition}\label{D lnes}
We say that $L:\Delta\times X\rightarrow\mathbb{R}_{+}$ is
 a {\it Lyapunov sequence} for the  system (\ref{A}) if
there exists a constant $a\in (1,\infty)$ such that
\begin{equation*}
L(n,n,x)=\parallel x\parallel
\end{equation*}
and
\begin{equation}\label{Eq L}
L\left(m,n,x\right)-aL\left(m-1,n,x\right)\geq
\parallel \mathcal{A}(m,n)x\parallel
\end{equation}
for all $(m,n,x)\in \Delta\times X,$ with $m>n.$
\end{definition}
\begin{theorem}\label{T: Lnes}
The linear discrete-time system (\ref{A}) is NPIS if and only if
there exist a Lyapunov sequence and a nondecresing sequence $\beta
:\mathbb{N}\rightarrow [1,\infty)$ such that
\begin{equation}
L(m,n,x) \leq \beta(m)\parallel \mathcal{A}(m,n)x\parallel,
\end{equation}
for all $(m,n,x)\in\Delta\times X.$
\end{theorem}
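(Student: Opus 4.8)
The plan is to prove both implications, using Theorem~\ref{T: NPIS} as the bridge on the necessity side and a direct telescoping estimate on the sufficiency side.

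\textbf{Sufficiency (existence of a Lyapunov sequence $\Rightarrow$ NPIS).} Suppose $L$ is a Lyapunov sequence with constant $a>1$ and $L(m,n,x)\le\beta(m)\|\mathcal{A}(m,n)x\|$. Iterating the defining inequality \eqref{Eq L} of Definition~\ref{D lnes}, I would write
$$L(m,n,x)\ge aL(m-1,n,x)\ge a^2L(m-2,n,x)\ge\cdots\ge a^{m-n}L(n,n,x)=a^{m-n}\|x\|.$$
Combining with the upper bound gives $a^{m-n}\|x\|\le\beta(m)\|\mathcal{A}(m,n)x\|$, hence $\|x\|\le\beta(m)\,(1/a)^{m-n}\|\mathcal{A}(m,n)x\|$. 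Applying this with $x$ replaced by $\mathcal{A}(n,p)x$ and using the cocycle identity $\mathcal{A}(m,n)\mathcal{A}(n,p)=\mathcal{A}(m,p)$ yields \eqref{npis} with $N(m)=\beta(m)$ and $r=1/a\in(0,1)$; one only needs $\beta$ nondecreasing and $\ge 1$, which is assumed. So the system is NPIS.

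\textbf{Necessity (NPIS $\Rightarrow$ existence of such a Lyapunov sequence).} Assume \eqref{A} is NPIS. The natural candidate is a weighted sum of the trajectory norms, modeled on the quantity appearing in Theorem~\ref{T: NPIS}. I would fix $d>1$ (in fact $d\in(1,1/r)$) and define
$$L(m,n,x)=\sum_{k=n}^{m}d^{m-k}\|\mathcal{A}(k,n)x\|.$$
Then $L(n,n,x)=\|x\|$ is immediate. For the Lyapunov inequality, split off the top term and factor $d$ out of the rest:
$$L(m,n,x)=\|\mathcal{A}(m,n)x\|+d\sum_{k=n}^{m-1}d^{m-1-k}\|\mathcal{A}(k,n)x\|=\|\mathcal{A}(m,n)x\|+d\,L(m-1,n,x),$$
so \eqref{Eq L} holds as an equality with $a=d>1$. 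Finally, the upper bound is exactly the $p=1$ instance of Theorem~\ref{T: NPIS}: NPIS gives a nondecreasing $\theta:\mathbb{N}\to[1,\infty)$ with $\sum_{k=n}^{m}d^{m-k}\|\mathcal{A}(k,n)x\|\le\theta(m)\|\mathcal{A}(m,n)x\|$, i.e. $L(m,n,x)\le\theta(m)\|\mathcal{A}(m,n)x\|$, so we may take $\beta=\theta$. (If one prefers not to invoke Theorem~\ref{T: NPIS} directly, the same bound follows from Definition~\ref{D: npis} by the geometric-series computation in its proof of necessity, with $\beta(m)=N(m)/(1-rd)$.)

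\textbf{Main obstacle.} There is no serious obstacle here — the result is essentially a repackaging of Theorem~\ref{T: NPIS} with $p=1$, and the only points requiring care are bookkeeping ones: checking that the constructed $\beta$ (or $\theta$) is genuinely nondecreasing and bounded below by $1$, that the constant $a$ in the Lyapunov definition can legitimately be taken equal to $d$, and that the cocycle identity is applied correctly to pass from the two-variable estimate to the three-variable form \eqref{npis}. The mild subtlety worth a sentence in the writeup is the admissible range of $d$: one needs $d>1$ for the Lyapunov constant and $d<1/r$ for the geometric series to converge, and NPIS with ratio $r\in(0,1)$ guarantees this interval is nonempty.
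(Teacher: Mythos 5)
Your proof is correct and follows essentially the same route as the paper: the same weighted-sum Lyapunov sequence $L(m,n,x)=\sum_{k=n}^{m}d^{m-k}\parallel\mathcal{A}(k,n)x\parallel$ with $d\in\left(1,\frac{1}{r}\right)$ and the geometric-series upper bound for necessity, and the telescoped lower bound $a^{m-n}\parallel x\parallel\leq L(m,n,x)\leq\beta(m)\parallel\mathcal{A}(m,n)x\parallel$ for sufficiency. The only (harmless) differences are cosmetic: you take $a=d$ instead of $a\in(1,d)$ and conclude NPIS directly by substituting $x\mapsto\mathcal{A}(n,p)x$ via the cocycle identity, where the paper instead invokes Proposition~\ref{P: mneis}.
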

\begin{proof}
{\it Necessity.} Let $d>1$. We define $L:\Delta\times
X\rightarrow\mathbb{R}_{+}$ by
$$L(m,n,x)=\sum\limits_{k=n}^{m}d^{m-k}\parallel \mathcal{A}(k,n)x\parallel,$$
for all $(m,n,x)\in\Delta\times X.$

First, we observe that
\begin{align*}
 L(m,n,x) &=\sum\limits_{k=n}^{m}d^{m-k}\parallel \mathcal{A}(k,n)x\parallel\\
 &=d^{m-n}\parallel x\parallel +\ldots +\parallel\mathcal{A}(m,n)x\parallel\\
 &=dL(m-1,n,x)+\parallel\mathcal{A}(m,n)x\parallel,
\end{align*}
 for all $(m,n,x)\in\Delta\times X,$ with $m>n.$ Hence
 $$L\left(m,n,x\right)-aL\left(m-1,n,x\right)\geq
\parallel \;\mathcal{A}(m,n)x\parallel$$
for every $a\in(1,d)$ and $(m,n,x)\in\Delta\times X,$ with $m>n.$

On the other hand, for $d\in \left(1,\dfrac{1}{r}\right)$ we have
that
\begin{align*}
L(m,n,x) &\leq \sum\limits_{k=n}^{m}N(m)d^{m-k}r^{m-k}\parallel
\mathcal{A}(m,n)x\parallel\\
&=N(m)\parallel\mathcal{A}(m,n)x\parallel\sum\limits_{k=n}^{m}(dr)^{m-k}\\
&\leq \dfrac{N(m)}{1-dr}\parallel\mathcal{A}(m,n)x\parallel\\
&=\beta (m)\parallel\mathcal{A}(m,n)x\parallel.
\end{align*}

{\it Sufficiency.} According to (\ref{Eq L}) we have that
\begin{align*}
L(m,n,x) -&aL(m-1,n,x)\geq\;\parallel \mathcal{A}({m},{n})x\parallel\\
L(m-1,n,x) -&aL(m-2,n,x)\geq\;\parallel \mathcal{A}({m-1},{n})x\parallel\\
\ldots\;\;\;\;\;\ldots\;\;\;\;\;&\ldots\;\;\;\;\;\ldots\;\;\;\;\;\ldots\\
L(n+1,n,x)-&aL(n,n,x)\geq\;\mathcal{A}(n+1,n,x)
\end{align*}
which implies
\begin{equation}\label{eq1 DemT}
\sum\limits_{j=n}^{m}a^{m-j}\parallel
\mathcal{A}({j},{n})x\parallel\; \leq L(m,n,x)\leq
\beta(m)\parallel \mathcal{A}(m,n)x\parallel
\end{equation}
But
\begin{equation}\label{eq12 DemT}
\sum\limits_{j=n}^{m}a^{m-j}\parallel
\mathcal{A}({j},{n})x\parallel \geq a^{m-n}\parallel x\parallel.
\end{equation}
Now, using (\ref{eq1 DemT}), (\ref{eq12 DemT}) and Proposition
\ref{P: mneis} we obtain that system (\ref{A}) is NPIS.
\end{proof}

As a consequence of the previous theorem we obtain
\begin{corollary}
The linear discrete-time system (\ref{A}) is NPIS if and only if
there exist a nondecreasing sequence $\theta
:\mathbb{N}\rightarrow [1,\infty)$ such that
\begin{equation*}
\sum\limits_{k=n}^{m}\parallel \mathcal{A}(k,n)x\parallel\leq
\theta (m)\parallel\mathcal{A}(m,n)x\parallel,
\end{equation*}
for all $(m,n,x)\in\Delta\times X.$
\end{corollary}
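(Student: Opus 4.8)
The plan is to derive both implications from results already established: the forward implication (necessity) from Theorem \ref{T: Lnes}, and the converse (sufficiency) from Proposition \ref{P: mneis}.

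\emph{Necessity.} Suppose (\ref{A}) is NPIS. By Theorem \ref{T: Lnes} there are a Lyapunov sequence $L$, with an associated constant $a\in(1,\infty)$, and a nondecreasing $\beta:\mathbb{N}\to[1,\infty)$ with $L(m,n,x)\le\beta(m)\|\mathcal{A}(m,n)x\|$. Telescoping the defining relations of $L$ with the weights $a^{m-k}$ — this is precisely the estimate (\ref{eq1 DemT}) — gives $\sum_{k=n}^{m}a^{m-k}\|\mathcal{A}(k,n)x\|\le L(m,n,x)$, and since $a>1$ we obtain $\sum_{k=n}^{m}\|\mathcal{A}(k,n)x\|\le L(m,n,x)\le\beta(m)\|\mathcal{A}(m,n)x\|$, so $\theta:=\beta$ works. (The same bound can be read off directly from Definition \ref{D: npis} by summing $\|\mathcal{A}(k,n)x\|\le N(m)r^{m-k}\|\mathcal{A}(m,n)x\|$ over $n\le k\le m$, which yields $\theta(m)=N(m)/(1-r)$.)

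\emph{Sufficiency.} The guiding idea here is that one should not try to extract an exponential estimate from the hypothesis (which carries no geometric weight $d^{p(m-k)}$, unlike Theorem \ref{T: NPIS}), but only a polynomial lower bound for $\|\mathcal{A}(m,n)x\|$; Proposition \ref{P: mneis} then converts it into NPIS, since its hypothesis only requires $\tau(n)\to\infty$ at \emph{any} rate. Fix $(m,n,x)\in\Delta\times X$. Applying the assumed inequality with $m$ replaced by $k$, for each $k$ with $n\le k\le m$ we have
\[
\|x\|=\|\mathcal{A}(n,n)x\|\ \le\ \sum_{j=n}^{k}\|\mathcal{A}(j,n)x\|\ \le\ \theta(k)\|\mathcal{A}(k,n)x\|\ \le\ \theta(m)\|\mathcal{A}(k,n)x\|,
\]
so $\|\mathcal{A}(k,n)x\|\ge\|x\|/\theta(m)$ for $n\le k\le m$. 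Summing these $m-n+1$ estimates and invoking the hypothesis once more,
\[
\theta(m)\|\mathcal{A}(m,n)x\|\ \ge\ \sum_{k=n}^{m}\|\mathcal{A}(k,n)x\|\ \ge\ (m-n+1)\,\frac{\|x\|}{\theta(m)},
\]
hence $(m-n+1)\|x\|\le\theta(m)^{2}\|\mathcal{A}(m,n)x\|$. Taking $\varphi(m):=\theta(m)^{2}$ and $\tau(j):=j+1$ (both nondecreasing, $[1,\infty)$-valued, with $\tau(j)\to\infty$), relation (\ref{eq1 P:mneis}) holds for every $(m,n,x)\in\Delta\times X$ (trivially when $m=n$), so Proposition \ref{P: mneis} gives that (\ref{A}) is NPIS.

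The only delicate point is this converse direction: the hypothesis is formally weaker than the summation criterion of Theorem \ref{T: NPIS}, so one cannot manufacture a Lyapunov sequence from it and feed it back to Theorem \ref{T: Lnes}. What makes the argument go through is the self-improving character of the inequality — it is valid at every intermediate index $k\le m$, which bounds each term $\|\mathcal{A}(k,n)x\|$ below by $\|x\|/\theta(m)$ — together with the fact that Proposition \ref{P: mneis} is insensitive to the growth rate of $\tau$.
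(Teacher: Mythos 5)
Your proof is correct, and it is worth noting that it is more complete than what the paper actually records: the paper's proof of this corollary is a one-line reference to Definition \ref{D: npis}, the proof of Theorem \ref{T: Lnes} and Proposition \ref{P: mneis}. Your necessity argument coincides with that sketch (either summing the geometric series from Definition \ref{D: npis} to get $\theta(m)=N(m)/(1-r)$, or reading the bound off the weighted estimate (\ref{eq1 DemT}) since $a>1$). The real difference is in sufficiency. The route suggested by the paper's reference to Theorem \ref{T: Lnes} does not transfer verbatim, because the corollary's hypothesis lacks the exponential weights $a^{m-j}$ on which the estimates (\ref{eq1 DemT})--(\ref{eq12 DemT}) rest, and extracting only the single term $k=n$ from the unweighted sum gives $\parallel x\parallel\leq\theta(m)\parallel\mathcal{A}(m,n)x\parallel$, i.e.\ a \emph{constant} $\tau$, which does not meet the requirement $\tau(n)\to\infty$ in Proposition \ref{P: mneis}. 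Your self-improving step -- applying the hypothesis at every intermediate index $k$ to get $\parallel\mathcal{A}(k,n)x\parallel\geq\parallel x\parallel/\theta(m)$ and then resumming to obtain $(m-n+1)\parallel x\parallel\leq\theta(m)^{2}\parallel\mathcal{A}(m,n)x\parallel$ -- supplies exactly the unbounded (merely polynomial) $\tau(j)=j+1$ that Proposition \ref{P: mneis} needs, and your observation that this proposition is insensitive to the growth rate of $\tau$ is precisely why the argument closes. So your proof uses the same ingredients the paper cites, but fills in a sufficiency argument the paper leaves implicit and which is not a mechanical repetition of the Theorem \ref{T: Lnes} computation.
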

\begin{proof}
It results from Definition \ref{D: npis} and the proof of Theorem
\ref{T: Lnes} and Proposition \ref{P: mneis}.
\end{proof}

\begin{flushleft}
{\bf{Acknowledgments}}
\end{flushleft}

The authors would like to express their sincere thanks to the reviewer for the valuable suggestions and comments which
have led to an improved final version of the paper.


\begin{thebibliography}{00}

\bibitem{dacunha}
\textsl{Jeffrey J. DaCunha}, Instability results for slowly time
varying linear dynamic systems on time scales, J. Math. Anal.
Appl. 328 (2007), 1278-1289.

\bibitem{hahn}
\textsl{W. Hahn}, Stability of Motion, Grundlehren Math. Wiss.,
vol. 138, Springer, 1967.

\bibitem{halanay}
\textsl{A. Halanay, D. Wexler}, Qualitative Theory of Impulsive
Systems [English translation], Edit. Acad. R.P.R., Bucure\c sti,
1968.

\bibitem{hirsch}
\textsl{Morris W. Hirsch}, The dynamical systems approach to
differential equations, Bull. Amer. Math. Soc. (N.S.), Vol. 11,
No. 1(1984), 1-64.
\bibitem{lasalle}
\textsl{J. LaSalle, S. Lefschetz}, Stability by Liapunov's Direct
Method, with Applications, Math. Sci. Eng., vol. 4, Academic
Press, 1961.

\bibitem{lyapunov}
\textsl{A. Lyapunov}, The General Problem of the Stability of
Motion, Taylor \& Francis, 1992.

\bibitem{martynyuk}
\textsl{A.A. Martynyuk}, Stability Analysis of Discrete Systems,
Internat. Appl. Mech., VoL. 36, No.7(2000), 835-865.

\bibitem{megan1}
\textsl{M. Megan, A. L. Sasu and B. Sasu}, Nonuniform exponential
unstability of evolution operators in Banach spaces, Glas. Mat.
Vol. 36(56)(2001), 287-295.

\bibitem{megan2}
\textsl{M. Megan, A. L. Sasu and B. Sasu}, Perron Conditions for
Uniform Exponential Expansiveness of Linear Skew-Product Flows,
Monatsh. Math., Vol. 138 (2003), 145–157.

\bibitem{vanminh}
\textsl{N. van Minh, F. R$\ddot{a}$biger, and R. Schnaubelt},
Exponential stability, exponential expansiveness, and exponential
dichotomy of evolution equations on the half-line, Integral
Equations Operator Theory, vol. 32, no. 3(1998), 332-353.

\bibitem{naulin}
\textsl{R. Naulin, C. J. Vanegas}, Instability of discrete
systems,  Electron. J. Differential Equations, Vol. 33 (1998),
1-11.

\bibitem{popa4}
\textsl{I.-L. Popa, T. Ceau\c su, M. Megan}, On exponential
stability for linear discrete-time systems in Banach spaces, Comp.
Math. Appl., Vol. 63 (2012), 1497-1503.

\bibitem{popa5}
\textsl{I.-L. Popa}, A Note on Power Instability of Linear
Discrete-time Systems in Banach Spaces,  An. Univ. Vest Timiº.
Ser. Mat.-Inform.. Vol. 1 (2012), 83-89.

\bibitem{rugh}
\textsl{Wilson J. Rugh}, Linear system theory, Prentice-Hall, 1996

\bibitem{sasu}
\textsl{A.L. Sasu}, Exponential instability and complete
admissibility for semigroups in Banach spaces, Rend. Sem. Mat.
Univ. Pol. Torino - Vol. 63, 2 (2005), 141-151.

\bibitem{slyusarchuk}
\textsl{V.E. Slyusarchuk}, On the instability of difference
equations with respect to the first approximation, Differents.
Uravn., 22, No.4 (1986), 722-723.

\bibitem{wang}
\textsl{B.-G. Wang, Z.-C. Wang}, Exponential dichotomy and
admissibility of linearized skew-product semiflows defined on a
compact positively invariant subset of semiflows, Nonlinear Anal.
Real World Appl., Vol. 10 (2009) 2062-2071.

\end{thebibliography}
\end{document}